\documentclass[12pt]{article}
\oddsidemargin 0 mm
\topmargin -10 mm
\headheight 0 mm
\headsep 0 mm 
\textheight 246.2 mm
\textwidth 159.2 mm
\footskip 9 mm
\setlength{\parindent}{0pt}
\setlength{\parskip}{5pt plus 2pt minus 1pt}
\pagestyle{plain}
\usepackage{amssymb}
\usepackage{amsthm}
\usepackage{amsmath}
\usepackage{graphicx}
\usepackage{enumerate}

\DeclareMathOperator{\DM}{DM}
\DeclareMathOperator{\BDM}{\mathbf{DM}}

\newtheorem{theorem}{Theorem}[section]
\newtheorem{definition}[theorem]{Definition}
\newtheorem{lemma}[theorem]{Lemma}

\newtheorem{example}[theorem]{Example}

\title{Consistent posets}
\author{Ivan~Chajda and Helmut~L\"anger$^1$}
\date{}
\begin{document}
\footnotetext[1]{Corresponding author}
\footnotetext[2]{Support of the research of the authors by the Austrian Science Fund (FWF), project I~4579-N, and the Czech Science Foundation (GA\v CR), project 20-09869L, entitled ``The many facets of orthomodularity'', as well as by \"OAD, project CZ~02/2019, entitled ``Function algebras and ordered structures related to logic and data fusion'', and, concerning the first author, by IGA, project P\v rF~2020~014, is gratefully acknowledged.}
\maketitle
\begin{abstract}
We introduce so-called consistent posets which are bounded posets with an antitone involution $'$ where the lower cones of $x,x'$ and of $y,y'$ coincide provided $x,y$ are different form $0,1$ and, moreover, if $x,y$ are different form $0$ then their lower cone is different form $0$, too. We show that these posets can be represented by means of commutative meet-directoids with an antitone involution satisfying certain identities and implications. In the case of a finite distributive or strongly modular consistent poset, this poset can be converted into a residuated structure and hence it can serve as an algebraic semantics of a certain non-classical logic with unsharp conjunction and implication. Finally we show that the Dedekind-MacNeille completion of a consistent poset is a consistent lattice, i.e.\ a bounded lattice with an antitone involution satisfying the above mentioned properties.
\end{abstract}

{\bf AMS Subject Classification:} 06A11, 06B75, 06C15, 03G25

{\bf Keywords:} Consistent poset, antitone involution, distributive poset, strongly modular poset, commutative meet-directoid, residuation, adjointness, Dedekind-MacNeille completion

\section{Introduction}
In some non-classical logics the contraposition law is assumed. An algebraic semantics of such logics is provided by means of De Morgan posets, i.e.\ bounded posets equipped with a unary operation $'$ which is an antitone involution. This operation $'$ is then considered as a negation. Clearly, $0'=1$ and $1'=0$, but we do not ask $'$ to be a complementation. In particular, this is the case of the logic of quantum mechanics represented by means of an orthomodular lattice or an orhomodular poset in a broad sense. In orthomodular lattices the following implication holds
\[
x\leq y\text{ and }y\wedge x'=0\text{ imply }x=y.
\]
In fact, for an ortholattice this condition is necessary and sufficient for being orthomodular. When working with orthomodular posets, the aforementioned condition can be expressed in the form
\[
x\leq y\text{ and }L(y,x')=\{0\}\text{ imply }x=y
\]
where $L(y,x')$ denotes the lower cone of $y$ and $x'$.

However, there are logics where such a condition can be recognized as too restrictive. Hence, we can relax the equality $x=y$ by asking that $x,y$ have the same lower cones generated by the pairs including the involutive members, i.e.\ we consider the condition
\[
x\leq y\text{ and }L(y,x')=\{0\}\text{ imply }L(x,x')=L(y,y').
\]
Of course, if $\mathbf P=(P,\leq,{}',0,1)$ is a bounded poset where the operation $'$ is a complementation then
\[
L(x,x')=\{0\}=L(y,y')
\]
for all $x,y\in P$. However, this is rather restrictive. Hence, we do not ask in general that $'$ is a complementation, but $\mathbf P$ should satisfy $L(x,x')=L(y,y')$ for $x,y\neq0,1$.

Starting with this condition, we can release the assumption that $x,y$ are comparable but, on the other hand, we will ask that $L(x,y)=\{0\}$ if and only if at least one of the entries $x,y$ is equal to $0$. Such a {\em poset} will be called {\em consistent} in the sequel. It represents certain logics satisfying De Morgan's laws. Usually, a logic is considered to be well-founded if it contains a logical connective implication which is related with conjunction via the so-called adjointness. In what follows, we show that consistent posets can be represented by means of algebras (with everywhere defined operations) which enables to use algebraic tools for investigating these posets. Moreover, we show when these posets can be organized into a kind of residuated structure, i.e.\ we introduce conjunction and implication related via adjointness. Of course, working with posets, one cannot expect that these logical connectives will be operations giving a unique result for given entries. We will define operators assigning to the couple $x,y$ of entries a certain subset of $P$. It is in accordance with the description of uncertainty of such a logic based on the fact that a poset instead of a lattice is used.

\section{Preliminaries}

In our previous papers \cite{CL14} and \cite{CL18} we studied complemented posets. We showed when such a poset can be represented by a commutative directoid (\cite{CKL1}, \cite{CL11} and \cite{JQ}) and when it can be organized into a residuated or left-residuated structure (\cite{CKL2}, \cite{CL14}, \cite{CL17}, \cite{CL18} and \cite{CL1}). Now we introduce a bit more general posets with an antitone involution which need not be a complementation but it still shares similar properties. We again try to characterize these posets by identities or implications of corresponding commutative meet-directoids similarly as it was done in \cite{CKL1}. This approach has the advantage that commutative directoids are algebras similar to semilattices and hence we can use standard algebraic tools for their constructions, see e.g.\ \cite{JQ}. We also solve the problem when these so-called consistent posets can be converted into residuated or left-residuated structures.

For the reader's convenience, we recall several concepts concerning posets.

Let $\mathbf P=(P,\leq)$ be a poset, $a,b\in P$ and $A,B\subseteq P$. We write $a\parallel b$ if $a$ and $b$ are incomparable and we extend $\leq$ to subsets by defining
\[
A\leq B\text{ if and only if }x\leq y\text{ for all }x\in A\text{ and }y\in B.
\]
Instead of $\{a\}\leq B$ and $A\leq\{b\}$ we also write $a\leq B$ and $A\leq b$, respectively. Analogous notations are used for the reverse order $\geq$. Moreover, we define
\begin{align*}
L(A) & :=\{x\in P\mid x\leq A\}, \\
U(A) & :=\{x\in P\mid A\leq x\}.
\end{align*}
Instead of $L(A\cup B)$, $L(\{a\}\cup B)$, $L(A\cup\{b\})$ and $L(\{a,b\})$ we also write $L(A,B)$, $L(a,B)$, $L(A,b)$ and $L(a,b)$, respectively. Analogous notations are used for $U$. Instead of $L(U(A))$ we also write $LU(A)$. Analogously, we proceed in similar cases. Sometimes we identify singletons with their unique element, so we often write $L(a,b)=0$ and $U(a,b)=1$ instead of $L(a,b)=\{0\}$ and $U(a,b)=\{1\}$, respectively. The {\em poset} $\mathbf P$ is called {\em downward directed} if $L(x,y)\neq\emptyset$ for all $x,y\in P$. Of course, every poset with $0$ is downward directed. The {\em poset} $\mathbf P$ is called {\em bounded} if it has a least element $0$ and a greatest element $1$. This fact will be expressed by notation $(P,\leq,0,1)$.

The following concept was introduced in \cite{LR}: The poset $\mathbf P$ is called {\em modular} if
\begin{enumerate}
\item[(1)] $x\leq z$ implies $L(U(x,y),z)=LU(x,L(y,z))$.
\end{enumerate}
This is equivalent to
\[
x\leq z\text{ implies }UL(U(x,y),z)=U(x,L(y,z)).
\]
Recall from \cite{CL19} that $\mathbf P$ is called {\em strongly modular} if it satisfies the LU-identities
\begin{enumerate}
\item[(2)] $L(U(x,y),U(x,z))\approx LU(x,L(y,U(x,z)))$,
\item[(3)] $L(U(L(x,z),y),z)\approx LU(L(x,z),L(y,z))$.
\end{enumerate}
These are equivalent to
\begin{align*}
UL(U(x,y),U(x,z)) & \approx U(x,L(y,U(x,z))), \\
UL(U(L(x,z),y),z) & \approx U(L(x,z),L(y,z)),
\end{align*}
respectively. Observe that in case $x\leq z$ both (2) and (3) yield (1). Hence, every strongly modular poset is modular. Moreover, every modular lattice is a strongly modular poset. A strongly modular poset which is not a lattice is presented in Example~\ref{ex1}.

The {\em poset} $\mathbf P$ is called {\em distributive} if it satisfies the following identity:
\begin{enumerate}
\item[(4)] $L(U(x,y),z)\approx LU(L(x,z),L(y,z))$.
\end{enumerate}
This identity is equivalent to every single one of the following identities (see \cite{LR}):
\begin{align*}
UL(U(x,y),z) & \approx U(L(x,z),L(y,z)), \\
 U(L(x,y),z) & \approx UL(U(x,z),U(y,z)), \\
LU(L(x,y),z) & \approx L(U(x,z),U(y,z)).
\end{align*}
In fact, the inclusions
\begin{align*}
LU(L(x,z),L(y,z)) & \subseteq L(U(x,y),z), \\
UL(U(x,z),U(y,z)) & \subseteq U(L(x,y),z)
\end{align*}
hold in every poset. Hence, to check distributivity, we need only to confirm one of the converse inclusions. Observe that in case $x\leq z$ (4) implies (1). Hence every distributive poset is modular. Distributivity does not imply strong modularity. A unary operation $'$ on $P$ is called
\begin{itemize}
\item {\em antitone} if, for all $x,y\in P$, $x\leq y$ implies $y'\leq x'$,
\item an {\em involution} if it satisfies the identity $x''\approx x$,
\item a {\em complementation} if $L(x,x')\approx0$ and $U(x,x')\approx1$.
\end{itemize}
A {\em poset} is called {\em Boolean} if it is distributive and has a unary operation which is a complementation. For $A\subseteq P$ we define
\begin{align*}
\max A & :=\text{ set of all maximal elements of }A, \\
\max A & :=\text{ set of all minimal elements of }A, \\
    A' & :=\{x'\mid x\in A\}.
\end{align*} If the poset is bounded and distributive, we can prove the following property of an antitone involution.

\begin{lemma}
Let $(P,\leq,{}',0,1)$ be a bounded distributive poset with an antitone involution and $a,b\in P$ with $a\leq b$ and $L(b,a')=\{0\}$. Then the following hold:
\begin{align*}
L(a,a') & =L(b,b')=\{0\}, \\
U(a,a') & =U(b,b')=\{1\}.
\end{align*}
\end{lemma}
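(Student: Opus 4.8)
The plan is to obtain all four equalities purely from the fact that $'$ is an antitone involution, so that distributivity --- although it is the stated setting --- will not actually be invoked.

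First I would treat the two lower-cone equalities. From $a\le b$ and antitonicity of $'$ we get $b'\le a'$, hence $L(a)\subseteq L(b)$ and $L(b')\subseteq L(a')$. Therefore
\[
L(a,a')=L(a)\cap L(a')\subseteq L(b)\cap L(a')=L(b,a')=\{0\},
\]
and similarly
\[
L(b,b')=L(b)\cap L(b')\subseteq L(b)\cap L(a')=L(b,a')=\{0\}.
\]
Since $0$ lies in every lower cone, both inclusions are equalities, giving $L(a,a')=L(b,b')=\{0\}$.

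For the upper-cone equalities I would first record the duality $\bigl(L(S)\bigr)'=U(S')$, valid for every $S\subseteq P$: since $'$ is an order-reversing bijection with $x''=x$, the condition ``$x\le s$ for all $s\in S$'' is equivalent to ``$s'\le x'$ for all $s'\in S'$'', so applying $'$ to the set $L(S)$ yields exactly $U(S')$. Applying this with $S=\{a,a'\}$, and noting $S'=\{a',a\}=S$ because $a''=a$, gives
\[
U(a,a')=\bigl(L(a,a')\bigr)'=\{0\}'=\{0'\}=\{1\},
\]
and in the same way $U(b,b')=\bigl(L(b,b')\bigr)'=\{1\}$.

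I do not expect a genuine obstacle here: the whole argument is an unwinding of the definitions of $L$, $U$ and of the involution, together with $0'=1$. The one place deserving a moment's care is the duality $\bigl(L(S)\bigr)'=U(S')$, where one must use that $'$ is order-reversing in both directions together with $x''=x$. It is worth remarking that distributivity of $\mathbf P$ plays no role in the above, so the conclusion in fact holds in every bounded poset with an antitone involution once $a\le b$ and $L(b,a')=\{0\}$ are assumed.
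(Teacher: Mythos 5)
Your proof is correct, and for the lower-cone equalities it takes a genuinely different and more elementary route than the paper. The paper derives $L(a,a')=\{0\}$ and $L(b,b')=\{0\}$ by substituting the hypothesis $L(b,a')=\{0\}$ into a chain of $LU$-computations that invokes the distributive identity $L(U(x,y),z)\approx LU(L(x,z),L(y,z))$ (with $x=a$, $y=b$, $z=a'$), which is why the lemma is stated for distributive posets. You instead observe that $a\leq b$ gives $L(a)\subseteq L(b)$ and, via antitonicity, $L(b')\subseteq L(a')$, so that both $L(a,a')$ and $L(b,b')$ sit inside $L(b,a')=\{0\}$ by pure monotonicity of lower cones; together with $0$ belonging to every lower cone in a bounded poset, this settles both equalities. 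Your remark is accurate: distributivity is not actually needed, and the conclusion holds in every bounded poset with an antitone involution satisfying the hypotheses, so your argument is strictly more general. For the upper cones your route coincides with the paper's: both apply the duality $\bigl(L(S)\bigr)'=U(S')$ with the self-inverse set $S=\{a,a'\}$ (resp.\ $\{b,b'\}$) and $0'=1$, and your justification of that duality, using that $'$ is an order-reversing involution, is complete.
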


\begin{proof}
We have
\begin{align*}
L(a,a') & =LUL(a,a')=LU(L(a,a'),0)=LU(L(a,a'),L(b,a'))=L(U(a,b),a')= \\
        & =L(U(b),a')=L(b,a')=\{0\}, \\
L(b,b') & =LUL(b',b)=LU(0,L(b',b))=LU(L(a',b),L(b',b))=L(U(a',b'),b)= \\
        & =L(U(a'),b)=L(a',b)=\{0\}, \\
U(a,a') & =(L(a',a))'=\{0\}'=\{1\}, \\
U(b,b') & =(L(b',b))'=\{0\}'=\{1\}.
\end{align*}
\end{proof}

Now we recall the concept of a commutative meet-directoid from \cite{JQ}, see also \cite{CL11} for details. We will use it for the characterization of consistent posets which will be introduced below. The advantage of this approach is that we characterize properties of posets by means of identities and quasiidentities of algebras. Hence, one can use algebraic tools for their investigation.

A {\em commutative meet-directoid} (see \cite{CL11} and \cite{JQ}) is a groupoid $\mathbf D=(D,\sqcap)$ satisfying the following identities:
\begin{align*}
                   x\sqcap x & \approx x\text{ (idempotency)}, \\
                   x\sqcap y & \approx y\sqcap x\text{ (commutativity)}, \\
(x\sqcap(y\sqcap z))\sqcap z & \approx x\sqcap(y\sqcap z)\text{ (weak associativity)}.
\end{align*}
Let $\mathbf P=(P,\leq)$ be a downward directed poset. Define $x\sqcap y:=x\wedge y$ for comparable $x,y\in P$ and let $x\sqcap y=y\sqcap x$ be an arbitrary element of $L(x,y)$ if $x,y\in P$ are incomparable. Then $\mathbb D(\mathbf P):=(P,\sqcap)$ is a commutative meet-directoid which is called a {\em meet-directoid assigned} to $\mathbf P$. Conversely, if $\mathbf D=(D,\sqcap)$ is a commutative meet-directoid and we define for all $x,y\in D$
\begin{enumerate}
\item[(5)] $x\leq y$ if and only if $x\sqcap y=x$
\end{enumerate}
then $\mathbb P(\mathbf D):=(D,\leq)$ is a downward directed poset, the so-called {\em poset induced} by $\mathbf D$. Though the assignment $\mathbf P\mapsto\mathbb D(\mathbf P)$ is not unique, we have $\mathbb P(\mathbb D(\mathbf P))=\mathbf P$ for every downward directed poset $\mathbf P$. Sometimes we consider posets and commutative meet-directoids together with a unary operation. Let $(D,\sqcap,{}')$ be a commutative meet-directoid $(D,\sqcap,{}')$ with an antitone involution, i.e.\ $'$ is antitone with respect to the partial order relation induced by (5). We define
\[
x\sqcup y:=(x'\sqcap y')'\text{ for all }x,y\in D.
\]
Then $\sqcup$ is also idempotent, commutative and weakly associative and we have for all $x,y\in D$
\begin{align*}
x\sqcup y & =x\vee y\text{ if }x,y\text{ are }comparable, \\
x\sqcup y & =y\sqcup x\in U(x,y)\text{ if }x\parallel y, \\
x\sqcap y & =x\text{ if and only if }x\sqcup y=y, \\
     L(x) & =\{z\sqcap x\mid z\in P\}, \\
     U(x) & =\{z\sqcup x\mid z\in P\}, \\
   L(x,y) & =\{(z\sqcap x)\sqcap(z\sqcap y)\mid z\in P\}, \\
   U(x,y) & =\{(z\sqcup x)\sqcap(z\sqcup y)\mid z\in P\}.
\end{align*}

Posets with an antitone involution can be characterized in the language of commutative meet-directoids by identities as follows. The following lemma was proved in \cite{CKL1}. For the convenience of the reader we provide a proof.

\begin{lemma}\label{lem1}
Let $\mathbf P=(P,\leq,{}')$ be a downward directed poset with a unary operation and $\mathbb D(\mathbf P)$ an assigned meet-directoid. Then $\mathbf P$ is a poset with an antitone involution if and only if $\mathbb D(\mathbf P)$ satisfies the identities
\begin{enumerate}
\item[{\rm(6)}] $x''\approx x$,
\item[{\rm(7)}] $(x\sqcap y)'\sqcap y'\approx y'$.
\end{enumerate}
\end{lemma}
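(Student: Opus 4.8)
The plan is to treat the two identities separately and to exploit the translation (5) between the order of $\mathbf P$ and the operation $\sqcap$. Identity (6) is nothing but the requirement that $'$ be an involution, so it contributes to both implications for free; the real content is the equivalence between antitonicity of $'$ and identity (7).

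For the ``only if'' part I would assume that $'$ is an antitone involution. Since the assigned directoid satisfies $x\sqcap y\in L(x,y)$ for all $x,y\in P$, we always have $x\sqcap y\leq y$. Applying antitonicity gives $y'\leq(x\sqcap y)'$, and reading this back through (5) together with commutativity of $\sqcap$ yields exactly $(x\sqcap y)'\sqcap y'=y'$, which is (7). Note that this argument uses only $x\sqcap y\leq y$, hence it is insensitive to the particular choice of the value of $\sqcap$ on incomparable pairs, so (7) in fact holds for \emph{every} meet-directoid assigned to $\mathbf P$.

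For the ``if'' part I would assume (6) and (7). Then (6) says that $'$ is an involution. To obtain antitonicity, take $x,y\in P$ with $x\leq y$; by (5) this means $x\sqcap y=x$, and substituting into (7) gives $x'\sqcap y'=y'$, i.e.\ $y'\leq x'$ by (5) and commutativity. Thus $'$ is antitone, so $\mathbf P=(P,\leq,{}')$ is a poset with an antitone involution.

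There is no serious obstacle here; the only point requiring a little care is the bookkeeping with (5), namely that ``$a\leq b$'' translates to ``$a\sqcap b=a$'' (equivalently, by commutativity, to ``$b\sqcap a=a$''), and the observation that in the ``if'' direction only the instances of (7) with $x\leq y$ — where $x\sqcap y$ is forced to equal $x$ — are actually used, while in the ``only if'' direction the general instances hold automatically because $x\sqcap y$ is a lower bound of $y$.
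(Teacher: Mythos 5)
Your proposal is correct and follows essentially the same argument as the paper: (6) is immediate, the forward direction of (7) comes from $x\sqcap y\leq y$ and antitonicity, and the converse specializes (7) to comparable pairs where $x\sqcap y=x$. No issues.
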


\begin{proof}
Condition (6) is evident by definition. Let $a,b\in P$. If (7) holds and $a\leq b$ then $b'=(a\sqcap b)'\sqcap b'=a'\sqcap b'\leq a'$ which shows that $'$ is antitone. If, conversely, $'$ is antitone then from $a\sqcap b\leq b$ we obtain $b'\leq(a\sqcap b)'$, i.e.\ $(a\sqcap b)'\sqcap b'=b'$ which is (7).
\end{proof}

\section{Characterizations by commutative meet-directoids}

Now we define our key concept.

\begin{definition}
A {\em consistent poset} is a bounded poset $(P,\leq,{}',0,1)$ with an antitone involution satisfying the following two conditions:
\begin{enumerate}
\item[{\rm(8)}] $L(x,x')=L(y,y')$ for all $x,y\in P\setminus\{0,1\}$,
\item[{\rm(9)}] $L(x,y)\neq0$ for all $x,y\in P\setminus\{0\}$.
\end{enumerate}
\end{definition}

It is easy to see that an at least three-element bounded poset $\mathbf P=(P,\leq,{}',0,1)$ with an antitone involution is consistent if and only if $\mathbf P$ has exactly one atom $a$ such that $P=[a,a']\cup\{0,1\}$ and $'$ is a complementation on the interval $([a,a'],\leq)$.

\begin{lemma}
The conditions {\rm(8)} and {\rm(9)} are independent.
\end{lemma}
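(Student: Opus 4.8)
The claim is that conditions (8) and (9) in the definition of a consistent poset are independent, so the plan is to exhibit two small bounded posets with an antitone involution, each satisfying exactly one of the two conditions. Both examples should be tiny enough that all the relevant lower cones can be listed by hand.

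First I would produce a bounded poset with antitone involution satisfying (8) but not (9). The natural source is a structure where two distinct nonzero elements have lower cone $\{0\}$; the simplest candidate is an antichain of two incomparable elements $a,b$ adjoined with $0$ and $1$, i.e.\ the ``diamond'' $M_2$. Here one takes $a'=b$ and $b'=a$, which is clearly an antitone involution with $0'=1$, $1'=0$. Then $L(a,a')=L(a,b)=\{0\}=L(b,a)=L(b,b')$, so (8) holds vacuously-plus-trivially (every nonzero non-unit element is $a$ or $b$, and both give lower cone $\{0\}$), but $L(a,b)=\{0\}$ with $a,b\neq 0$ violates (9). I would double-check antitonicity of $'$ on all comparabilities ($0<a<1$ forces $1=0'\geq a'=b$... wait, that fails: from $a\le 1$ we need $1'\le a'$, i.e.\ $0\le b$, which is fine; from $0\le a$ we need $a'\le 0'$, i.e.\ $b\le 1$, fine), so $M_2$ works.

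Second I would produce a bounded poset with antitone involution satisfying (9) but not (8). To break (8) I need two nonzero non-unit elements $x,y$ with $L(x,x')\neq L(y,y')$, while keeping every pair of nonzero elements with a nonzero lower bound. A chain will not do (on a chain $x\le x'$ or $x'\le x$, so $L(x,x')$ is $\{$something$\}$ that is the same singleton-or-not pattern and (8) tends to hold); instead I would take a poset where some element $a$ satisfies $a\le a'$ so $L(a,a')=L(a)\ni a\neq 0$, while another element $b$ is an atom whose square misses it. A workable small example: take the $6$-element poset $\{0,a,b,a',b',1\}$ where $a<b<1$, $a<b'<1$, $0<a$, with $'$ swapping $b\leftrightarrow b'$ and fixing... no, an involution with a fixed point needs care. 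Cleaner: use the poset $N_5$-like shape or simply the direct approach of taking a chain $0<a<a'<1$ (so $'$ fixes no point here, $a'$ is the complement-ish element with $a<a'$) together with an extra atom — I would search among posets on $5$ or $6$ elements for the smallest one; the key features needed are (i) $L$ of any two nonzero elements is nonzero, easiest if there is a unique atom below everything, and (ii) two non-unit nonzero elements with different lower cones, easiest if one such element lies above the atom-chain and another equals the atom. A unique atom $a$ automatically gives (9), and then choosing $'$ with $a'\neq$ (any other small element) so that $L(a,a')=L(a)=\{0,a\}$ while $L(c,c')=\{0\}$ for some other $c$ breaks (8).

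The main obstacle is the second example: one must simultaneously arrange a \emph{unique atom} (to force (9) cheaply) and yet have $'$ antitone, involutive, and producing two different lower cones $L(x,x')$. With a unique atom $a$ one has $a\le x$ for all $x\neq 0$, hence $a\le a'$, so $L(a,a')=L(a)$ is automatically nontrivial; the task reduces to finding \emph{another} non-unit element $c\neq a$ with $L(c,c')=\{0\}$, which is impossible if $a$ is the unique atom (since $a\in L(c,c')$ whenever $c\neq 0$). So a unique atom is incompatible with failing (8) in that direction — instead I must fail (8) while (9) holds \emph{without} a unique atom, making $L(x,y)\neq\{0\}$ hold by hand for every nonzero pair. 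The resolution is to look for a poset with two atoms that nonetheless have a common lower... no; rather a poset where the two atoms $a_1,a_2$ satisfy $L(a_1,a_2)\neq\{0\}$ is impossible. Hence (9) genuinely forces a unique atom in any poset with a least element, which then forces one direction of (8) for the atom but says nothing about larger elements — so the correct plan is: unique atom $a$, and two elements $x<y$ (both $\neq 0,1$) lying above $a$ with $L(x,x')=\{0,a,\dots\}\neq L(y,y')$; e.g.\ on a suitable $6$-element poset where $'$ moves things so that $x\wedge x'=a$ but $y\wedge y'$ is strictly bigger. I would finalize by drawing the Hasse diagram, defining $'$ explicitly, verifying the three axioms of an antitone involution on each comparability, and then tabulating the finitely many lower cones to confirm (9) holds and (8) fails.
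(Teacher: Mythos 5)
Your first example is exactly the paper's: the four-element Boolean algebra ($0<a,b<1$ with $a'=b$) satisfies (8) but not (9), and your verification is correct. The problem is the second half. You never actually exhibit a poset satisfying (9) but not (8): you describe a search strategy, correctly deduce that (9) forces a unique atom, and then stop at ``I would finalize by drawing the Hasse diagram, defining $'$ explicitly, verifying\dots''. For an independence lemma the entire content \emph{is} the two concrete examples, so leaving the second one as a plan is a genuine gap, not a stylistic shortcut.

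Worse, the step where you dismiss chains is simply wrong, and it steers you away from the intended (and simplest) example. On a chain $L(x,x')=L(\min(x,x'))$, which varies with $x$. Take the five-element chain $0<a<b<c<1$ with its unique antitone involution ($a'=c$, $b'=b$, $c'=a$). Then $L(a,a')=L(a,c)=\{0,a\}$ while $L(b,b')=L(b)=\{0,a,b\}$, so (8) fails; and (9) holds because any two nonzero elements have $a$ in their lower cone. This is precisely the paper's second example. Your later analysis (unique atom $a$, so $a\in L(c,c')$ for every $c\neq0$, and one must find two elements whose lower cones with their involutes differ in size) is actually a correct description of why this chain works --- you just needed to apply it to the structure you had already ruled out.
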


\begin{proof}
The four-element Boolean algebra satisfies (8) but not (9), and the five-element chain (together with its unique possible antitone involution) satisfies (9) but not (8).
\end{proof}

In the following we list examples of consistent posets.

\begin{example}
The poset depicted in Figure~1
\vspace*{-2mm}
\begin{center}
\setlength{\unitlength}{7mm}
\begin{picture}(8,12)
\put(4,1){\circle*{.3}}
\put(4,3){\circle*{.3}}
\put(1,5){\circle*{.3}}
\put(3,5){\circle*{.3}}
\put(5,5){\circle*{.3}}
\put(7,5){\circle*{.3}}
\put(1,7){\circle*{.3}}
\put(3,7){\circle*{.3}}
\put(5,7){\circle*{.3}}
\put(7,7){\circle*{.3}}
\put(4,9){\circle*{.3}}
\put(4,11){\circle*{.3}}
\put(4,3){\line(0,-1)2}
\put(4,3){\line(-3,2)3}
\put(4,3){\line(-1,2)1}
\put(4,3){\line(1,2)1}
\put(4,3){\line(3,2)3}
\put(4,9){\line(-3,-2)3}
\put(4,9){\line(-1,-2)1}
\put(4,9){\line(1,-2)1}
\put(4,9){\line(3,-2)3}
\put(4,9){\line(0,1)2}
\put(1,5){\line(0,1)2}
\put(1,5){\line(1,1)2}
\put(3,5){\line(-1,1)2}
\put(3,5){\line(0,1)2}
\put(5,5){\line(0,1)2}
\put(5,5){\line(1,1)2}
\put(7,5){\line(-1,1)2}
\put(7,5){\line(0,1)2}
\put(3.85,.25){$0$}
\put(4.4,2.85){$a$}
\put(.3,4.85){$b$}
\put(3.4,4.85){$c$}
\put(4.3,4.85){$d$}
\put(7.4,4.85){$e$}
\put(.3,6.85){$e'$}
\put(3.4,6.85){$d'$}
\put(4.3,6.85){$c'$}
\put(7.4,6.85){$b'$}
\put(4.4,8.85){$a'$}
\put(3.3,11.4){$1=0'$}
\put(3.2,-.75){{\rm Fig.\ 1}}
\end{picture}
\end{center}

\vspace*{4mm}

is consistent, but neither modular since
\[
L(U(b,d),e')=L(a',e')=L(e')\neq L(b)=LU(b)=LU(a,b)=LU(b,L(d,e')),
\]
nor a lattice since $d'$ and $e'$ are different minimal upper bounds of $b$ and $c$.
\end{example}

\begin{example}\label{ex1}
The poset visualized in Figure~2
\vspace*{-2mm}
\begin{center}
\setlength{\unitlength}{7mm}
\begin{picture}(8,12)
\put(4,1){\circle*{.3}}
\put(4,3){\circle*{.3}}
\put(1,5){\circle*{.3}}
\put(3,5){\circle*{.3}}
\put(5,5){\circle*{.3}}
\put(7,5){\circle*{.3}}
\put(1,7){\circle*{.3}}
\put(3,7){\circle*{.3}}
\put(5,7){\circle*{.3}}
\put(7,7){\circle*{.3}}
\put(4,9){\circle*{.3}}
\put(4,11){\circle*{.3}}
\put(4,3){\line(0,-1)2}
\put(4,3){\line(-3,2)3}
\put(4,3){\line(-1,2)1}
\put(4,3){\line(1,2)1}
\put(4,3){\line(3,2)3}
\put(4,9){\line(-3,-2)3}
\put(4,9){\line(-1,-2)1}
\put(4,9){\line(1,-2)1}
\put(4,9){\line(3,-2)3}
\put(4,9){\line(0,1)2}
\put(1,5){\line(0,1)2}
\put(1,5){\line(1,1)2}
\put(1,5){\line(2,1)4}
\put(3,5){\line(-1,1)2}
\put(3,5){\line(2,1)4}
\put(3,5){\line(0,1)2}
\put(5,5){\line(-2,1)4}
\put(5,5){\line(1,1)2}
\put(5,5){\line(0,1)2}
\put(7,5){\line(-2,1)4}
\put(7,5){\line(-1,1)2}
\put(7,5){\line(0,1)2}
\put(3.85,.25){$0$}
\put(4.4,2.85){$a$}
\put(.3,4.85){$b$}
\put(2.3,4.85){$c$}
\put(5.4,4.85){$d$}
\put(7.4,4.85){$e$}
\put(.3,6.85){$e'$}
\put(2.3,6.85){$d'$}
\put(5.4,6.85){$c'$}
\put(7.4,6.85){$b'$}
\put(4.4,8.85){$a'$}
\put(3.3,11.4){$1=0'$}
\put(3.2,-.75){{\rm Fig.\ 2}}
\end{picture}
\end{center}

\vspace*{4mm}

is consistent and strongly modular, but not a lattice since $b'$ and $e'$ are different minimal upper bounds of $c$ and $d$.
\end{example}

\begin{example}
The poset depicted in Figure~3
\vspace*{-2mm}
\begin{center}
\setlength{\unitlength}{7mm}
\begin{picture}(8,14)
\put(4,1){\circle*{.3}}
\put(4,3){\circle*{.3}}
\put(1,5){\circle*{.3}}
\put(3,5){\circle*{.3}}
\put(5,5){\circle*{.3}}
\put(7,5){\circle*{.3}}
\put(1,7){\circle*{.3}}
\put(7,7){\circle*{.3}}
\put(1,9){\circle*{.3}}
\put(3,9){\circle*{.3}}
\put(5,9){\circle*{.3}}
\put(7,9){\circle*{.3}}
\put(4,11){\circle*{.3}}
\put(4,13){\circle*{.3}}
\put(4,3){\line(0,-1)2}
\put(4,3){\line(-3,2)3}
\put(4,3){\line(-1,2)1}
\put(4,3){\line(1,2)1}
\put(4,3){\line(3,2)3}
\put(4,11){\line(-3,-2)3}
\put(4,11){\line(-1,-2)1}
\put(4,11){\line(1,-2)1}
\put(4,11){\line(3,-2)3}
\put(1,5){\line(0,1)4}
\put(7,5){\line(0,1)4}
\put(1,7){\line(1,1)2}
\put(1,5){\line(1,1)4}
\put(3,5){\line(1,1)4}
\put(5,5){\line(1,1)2}
\put(3,5){\line(-1,1)2}
\put(5,5){\line(-1,1)4}
\put(7,5){\line(-1,1)4}
\put(7,7){\line(-1,1)2}
\put(4,11){\line(0,1)2}
\put(3.85,.25){$0$}
\put(4.4,2.85){$a$}
\put(.3,4.85){$b$}
\put(2.3,4.85){$c$}
\put(5.4,4.85){$d$}
\put(7.4,4.85){$e$}
\put(.3,6.85){$f$}
\put(7.4,6.85){$f'$}
\put(2.3,8.85){$d'$}
\put(.3,8.85){$e'$}
\put(5.4,8.85){$c'$}
\put(7.4,8.85){$b'$}
\put(4.4,10.85){$a'$}
\put(3.3,13.4){$1=0'$}
\put(3.2,-.75){{\rm Fig.\ 3}}
\end{picture}
\end{center}

\vspace*{4mm}

is consistent and distributive, but neither Boolean since $L(a,a')=a\neq0$, nor a lattice since $c'$ and $d'$ are different minimal bounds of $b$ and $e$.
\end{example}

Using the language of commutative meet-directoids, we can easily characterize lower cones $L(a,b)$ as follows.

\begin{lemma}\label{lem2}
Let $(P,\leq)$ be a downward directed poset, $a,b,c\in P$ and $(P,\sqcap)$ an assigned meet-directoid. Then $c\in L(a,b)$ if and only if $c=(c\sqcap a)\sqcap(c\sqcap b)$.
\end{lemma}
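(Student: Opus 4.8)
The plan is to verify both implications directly from the definition of an assigned meet-directoid, using only three elementary facts: idempotency ($x\sqcap x\approx x$); the observation that $x\sqcap y\in L(x,y)$ for all $x,y\in P$ (immediate from the definition of $\mathbb D(\mathbf P)$, since for comparable $x,y$ we have $x\sqcap y=x\wedge y$, and for $x\parallel y$ the element $x\sqcap y$ was chosen inside $L(x,y)$); and the fact that the order induced by $\sqcap$ via (5) coincides with the original order $\leq$, that is, $x\leq y$ if and only if $x\sqcap y=x$ (this is precisely the identity $\mathbb P(\mathbb D(\mathbf P))=\mathbf P$ recalled in the Preliminaries).

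For the forward implication, assume $c\in L(a,b)$, i.e.\ $c\leq a$ and $c\leq b$. Since $c$ and $a$ are comparable with $c\leq a$, we get $c\sqcap a=c$, and likewise $c\sqcap b=c$. Hence $(c\sqcap a)\sqcap(c\sqcap b)=c\sqcap c=c$ by idempotency.

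For the converse, observe first that by the second fact above $(c\sqcap a)\sqcap(c\sqcap b)\leq c\sqcap a\leq a$ and, symmetrically, $(c\sqcap a)\sqcap(c\sqcap b)\leq c\sqcap b\leq b$. Therefore, if $c=(c\sqcap a)\sqcap(c\sqcap b)$, then $c\leq a$ and $c\leq b$, i.e.\ $c\in L(a,b)$.

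There is no genuine obstacle here; the statement is a routine unwinding of definitions. The only point requiring a moment's care is to use the correct order, namely that the partial order recovered from the assigned directoid via (5) is the one we started with, which is guaranteed by $\mathbb P(\mathbb D(\mathbf P))=\mathbf P$.
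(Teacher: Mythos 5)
Your proof is correct and follows essentially the same route as the paper's: the forward direction uses $c\sqcap a=c\sqcap b=c$ together with idempotency, and the converse uses $(c\sqcap a)\sqcap(c\sqcap b)\leq c\sqcap a\leq a$ and the symmetric chain for $b$. The only difference is that you spell out the auxiliary facts (that $x\sqcap y\in L(x,y)$ and that the induced order agrees with the original one) which the paper leaves implicit.
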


\begin{proof}
If $c\in L(a,b)$ then $c=c\sqcap c=(c\sqcap a)\sqcap(c\sqcap b)$. If, conversely, $c=(c\sqcap a)\sqcap(c\sqcap b)$ then
\begin{align*}
c & \leq c\sqcap a\leq a, \\
c & \leq c\sqcap b\leq b
\end{align*}
and hence $c\in L(a,b)$.
\end{proof}

Now we characterize consistent posets by means of commutative meet-directoids.

\begin{theorem}
Let $\mathbf P=(P,\leq,{}',0,1)$ be a bounded poset with a unary operation and $\mathbb D(\mathbf P)$ an assigned meet-directoid. Then $\mathbf P$ is consistent if and only if $\mathbb D(\mathbf P)$ satisfies identities {\rm(6)} and {\rm(7)} and implications {\rm(10)} and {\rm(11)}:
\begin{enumerate}
\item[{\rm(10)}] $x,y\neq0,1$ and $z=(z\sqcap x)\sqcap(z\sqcap x')$ imply $z=(z\sqcap y)\sqcap(z\sqcap y')$,
\item[{\rm(11)}] if $z=(z\sqcap x)\sqcap(z\sqcap y)$ implies $z=0$ then $x=0$ or $y=0$.
\end{enumerate}
\end{theorem}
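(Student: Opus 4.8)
The plan is to reduce the statement entirely to Lemmas~\ref{lem1} and~\ref{lem2}, which translate ``$'$ is an antitone involution'' and ``$c\in L(a,b)$'', respectively, into conditions in the directoid language; the two defining conditions (8) and (9) of consistency then translate directly into the implications (10) and (11). First I would note that a bounded poset is downward directed, so Lemma~\ref{lem1} applies and says that $\mathbb D(\mathbf P)$ satisfies (6) and (7) if and only if $'$ is an antitone involution on $\mathbf P$. Since boundedness and the presence of the unary operation are hypotheses of the theorem, it remains to show, under the standing assumption that $\mathbf P=(P,\leq,{}',0,1)$ is a bounded poset with an antitone involution, that (8) is equivalent to (10) and that (9) is equivalent to (11). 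Each of (6), (7), (10), (11) is, via Lemma~\ref{lem1} or Lemma~\ref{lem2}, equivalent to a condition phrased purely in terms of $\leq$ and $'$, so it is irrelevant which assigned meet-directoid is used.

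For (8)~$\Leftrightarrow$~(10): by Lemma~\ref{lem2}, for all $a,b,z\in P$ we have $z\in L(a,b)$ if and only if $z=(z\sqcap a)\sqcap(z\sqcap b)$. Specialising to $(a,b)=(x,x')$ and to $(a,b)=(y,y')$, implication (10) says exactly that for all $x,y\in P\setminus\{0,1\}$ and all $z\in P$, $z\in L(x,x')$ implies $z\in L(y,y')$. As (10) is demanded for every admissible pair $x,y$ --- in particular for the pair obtained by interchanging $x$ and $y$ --- this one-sided inclusion is equivalent to the equality $L(x,x')=L(y,y')$ for all $x,y\in P\setminus\{0,1\}$, which is precisely (8).

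For (9)~$\Leftrightarrow$~(11): since $0$ is the least element, $0\in L(x,y)$ for all $x,y\in P$, so $L(x,y)=\{0\}$ holds if and only if every $z\in L(x,y)$ equals $0$, i.e.\ (by Lemma~\ref{lem2}) if and only if ``$z=(z\sqcap x)\sqcap(z\sqcap y)$ implies $z=0$''. Hence the contrapositive of (9), namely ``$L(x,y)=\{0\}$ implies $x=0$ or $y=0$'', is literally (11); since a statement is equivalent to its contrapositive, (9)~$\Leftrightarrow$~(11). Combining the two equivalences with Lemma~\ref{lem1} completes the argument.

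I do not expect a substantial obstacle; the work is essentially the bookkeeping of translating between the poset and the assigned directoid. The two places where care is needed are: first, reading the quantifier structure of (11) correctly, since its inner ``implies'' is a universally quantified implication schema in $z$ nested inside an outer implication in $x,y$; and second, using the symmetry of the hypothesis of (10) to upgrade a single inclusion to the equality demanded by (8). It is also worth stating explicitly that boundedness is exactly what makes $0\in L(x,y)$ automatic, which is why ``$L(x,y)\neq\{0\}$'' is the correct reading of (9) and matches the negation of the conclusion of (11).
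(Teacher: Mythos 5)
Your proposal is correct and follows essentially the same route as the paper: Lemma~\ref{lem1} handles (6) and (7), Lemma~\ref{lem2} translates (10) and (11) into the lower-cone statements, the symmetry in $x,y$ upgrades the inclusion $L(x,x')\subseteq L(y,y')$ to equality (8), and (11) is recognized as the contrapositive of (9). The only addition beyond the paper's write-up is your explicit remark that boundedness gives downward directedness (so Lemma~\ref{lem1} applies) and that $0\in L(x,y)$ always, both of which are correct and harmless.
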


\begin{proof}
\
\begin{enumerate}
\item[(10)] According to Lemma~\ref{lem2} the following are equivalent:
\begin{align*}
& (10), \\
& \text{if }x,y\neq0,1\text{ and }z\in L(x,x')\text{ then }z\in L(y,y'), \\
& \text{if }x,y\neq0,1\text{ then }L(x,x')\subseteq L(y,y'), \\
& \text{if }x,y\neq0,1\text{ then }L(x,x')=L(y,y').
\end{align*}
\item[(11)] According to Lemma~\ref{lem2} the following are equivalent:
\begin{align*}
& (11), \\
& \text{if }x,y\neq0\text{ then there exists some }z\neq0\text{ with }z\in L(x,y), \\
& \text{if }x,y\neq0\text{ then }L(x,y)\neq0.
\end{align*}
\end{enumerate}
Lemma~\ref{lem1} completes the proof.
\end{proof}

We can also characterize downward directed distributive posets in a similar manner. The following theorem was proved in \cite{CL2}. For the convenience of the reader we provide a proof.

\begin{theorem}
Let $\mathbf P=(P,\leq)$ be a downward directed poset and $\mathbb D(\mathbf P)$ an assigned meet-directoid. Then $\mathbf P$ is distributive if and only if $\mathbb D(\mathbf P)$ satisfies implication {\rm(12)}:
\begin{enumerate}
\item[{\rm(12)}] $w\sqcap((t\sqcup x)\sqcup(t\sqcup y))=w\sqcap z=w$ and $s\sqcup((t\sqcap x)\sqcap(t\sqcap z))=s\sqcup((t\sqcap y)\sqcap(t\sqcap z))=s$ for all $t\in P$ imply $w\leq s$.
\end{enumerate}
\end{theorem}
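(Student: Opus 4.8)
The plan is to translate the poset-theoretic identity (4) for distributivity into the language of the assigned meet-directoid using the descriptions of $L(x)$, $U(x)$, $L(x,y)$ and $U(x,y)$ in terms of $\sqcap$ and $\sqcup$ recalled in the Preliminaries, together with Lemma~\ref{lem2} which characterizes membership $c\in L(a,b)$ by the equation $c=(c\sqcap a)\sqcap(c\sqcap b)$. Recall that distributivity is equivalent to the single inclusion $L(U(x,y),z)\subseteq LU(L(x,z),L(y,z))$, since the reverse inclusion holds in every poset. So it suffices to show that (12) is exactly this inclusion rewritten directoid-theoretically.

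First I would unfold the left-hand side: an element $w$ lies in $L(U(x,y),z)$ if and only if $w\le z$ and $w\le u$ for every $u\in U(x,y)$. Using $U(x,y)=\{(t\sqcup x)\sqcap(t\sqcup y)\mid t\in P\}$ and the order-characterization (5), the condition $w\in L(U(x,y),z)$ becomes: $w\sqcap z=w$ and $w\sqcap\bigl((t\sqcup x)\sqcap(t\sqcup y)\bigr)=w$ for all $t\in P$ — which is precisely the first clause of the hypothesis of (12) (noting $t\sqcup x=t\vee x$ etc. when comparable, and that $U(x,y)$ is exactly this set of elements). Next I would unfold the right-hand side: $w\in LU(L(x,z),L(y,z))$ means $w\le s$ for every $s\in U(L(x,z),L(y,z))$. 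An element $s$ lies in $U(L(x,z),L(y,z))$ iff $s\ge c$ for all $c\in L(x,z)$ and $s\ge c$ for all $c\in L(y,z)$; using $L(x,z)=\{(t\sqcap x)\sqcap(t\sqcap z)\mid t\in P\}$ and dualizing via $\sqcup$, the condition $s\in U(L(x,z),L(y,z))$ becomes $s\sqcup\bigl((t\sqcap x)\sqcap(t\sqcap z)\bigr)=s$ and $s\sqcup\bigl((t\sqcap y)\sqcap(t\sqcap z)\bigr)=s$ for all $t\in P$ — the second clause of the hypothesis of (12). Then the conclusion $w\le s$ of (12) says exactly that every such $w$ is below every such $s$, i.e.\ $L(U(x,y),z)\subseteq LU(L(x,z),L(y,z))$. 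Thus (12) holds for all $w,s,x,y,z$ iff this inclusion holds for all $x,y,z$, iff $\mathbf P$ is distributive.

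One technical point to check carefully is that the parametrized families really do exhaust the cones: e.g.\ $L(x,z)=\{(t\sqcap x)\sqcap(t\sqcap z)\mid t\in P\}$ — the inclusion $\subseteq$ comes from taking $t=c$ for $c\in L(x,z)$ (then $(c\sqcap x)\sqcap(c\sqcap z)=c\sqcap c=c$ by Lemma~\ref{lem2}), and $\supseteq$ is immediate since $(t\sqcap x)\sqcap(t\sqcap z)\le x$ and $\le z$. The analogous facts for $U(x,y)$ and for the ``double'' cones are already recorded in the excerpt. I would also note that quantifying $w$ and $s$ over all of $P$ in (12) is harmless: the nontrivial content is carried by those $w\in L(U(x,y),z)$ and $s\in U(L(x,z),L(y,z))$, and for $w,s$ failing the respective hypotheses the implication is vacuous. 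The main obstacle, such as it is, is purely bookkeeping: matching the hidden universal quantifier over $t$ on both sides and making sure the $\sqcup$-descriptions of upper cones are applied in the dual direction correctly; there is no real conceptual difficulty once the translation dictionary from the Preliminaries is in hand, and Lemma~\ref{lem2} (and its $U$-dual) does all the heavy lifting.
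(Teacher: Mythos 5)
Your proof is correct and follows essentially the same route as the paper: both translate membership in $L(U(x,y),z)$ and in $U(L(x,z),L(y,z))$ into the $\sqcap$/$\sqcup$ equations of (12) via the parametrized descriptions of the cones, and then use the fact that the single inclusion $L(U(x,y),z)\subseteq LU(L(x,z),L(y,z))$ already characterizes distributivity. The only slip is that you quote the Preliminaries' (mistyped) formula $U(x,y)=\{(t\sqcup x)\sqcap(t\sqcup y)\mid t\in P\}$, where the middle operation should be $\sqcup$ --- as it is in (12) itself and in the paper's proof --- but your argument is unaffected once this is corrected.
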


\begin{proof}
Since
\begin{align*}
& U(x,y)=\{(t\sqcup x)\sqcup(t\sqcup y)\mid t\in P\}, \\
& w\sqcap u=w\text{ is equivalent to }w\in L(u),
\end{align*}
$w\sqcap((t\sqcup x)\sqcup(t\sqcup y))=w\sqcap z=w$ is equivalent to $w\in L(U(x,y),z)$. Further, since
\begin{align*}
& L(x,z)=\{(t\sqcap x)\sqcap(t\sqcap z)\mid t\in P\}, \\
& L(y,z)=\{(t\sqcap y)\sqcap(t\sqcap z)\mid t\in P\}, \\
& s\sqcup u=s\text{ is equivalent to }s\in U(u),
\end{align*}
$s\sqcup((t\sqcap x)\sqcap(t\sqcap z))=s\sqcup((t\sqcap y)\sqcap(t\sqcap z))=s$ is equivalent to $s\in U(L(x,z),L(y,z))$. Hence the following are equivalent:
\begin{align*}
& (12), \\
& w\in L(U(x,y),z)\text{ and }s\in U(L(x,z),L(y,z))\text{ imply }w\leq s, \\
& L(U(x,y),z)\subseteq LU(L(x,z),L(y,z)), \\
& \mathbf P\text{ is distributive}.
\end{align*}
\end{proof}

\section{Residuation in consistent posets}

\begin{definition}
A {\em consistent residuated poset} is an ordered six-tuple $(P,\leq,\odot,\rightarrow,0,1)$ where $(P,\leq,0,1)$ is a bounded poset and $\odot$ and $\rightarrow$ are mappings {\rm(}so-called operators{\rm)} from $P^2$ to $2^P$ satisfying the following conditions for all $x,y,z\in P$:
\begin{itemize}
\item $x\odot y\approx y\odot x$,
\item $x\odot1\approx1\odot x\approx x$,
\item $x\odot y\leq z$ if and only if $x\leq y\rightarrow z$ {\rm(}adjointness{\rm)}.
\end{itemize}
\end{definition}

Let $(P,\leq,{}',0,1)$ be a poset with an antitone involution. Define mappings $\odot$ and $\rightarrow$ from $P^2$ to $2^P$ as follows:
\begin{enumerate}
\item[(13)] $\quad x\odot y:=\left\{
\begin{array}{ll}
0           & \text{if }x\leq y', \\
\max L(x,y) & \text{otherwise}
\end{array}
\right.
\quad\quad\quad x\rightarrow y:=\left\{
\begin{array}{ll}
1            & \text{if }x\leq y, \\
\min U(x',y) & \text{otherwise}
\end{array}
\right.$
\end{enumerate}

\begin{theorem}
Let $(P,\leq,{}',0,1)$ be a finite distributive consistent poset and $\odot$ and $\rightarrow$ be defined by {\rm(13)}. Then $(P,\leq,\odot,\rightarrow,0,1)$ is a consistent residuated poset.
\end{theorem}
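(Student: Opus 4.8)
The plan is to verify the three defining conditions of a consistent residuated poset one by one, with adjointness being the crux. Commutativity $x\odot y\approx y\odot x$ is immediate from~(13), since both the condition ``$x\le y'$'' is equivalent to ``$y\le x'$'' (using that $'$ is an antitone involution) and $\Max L(x,y)=\Max L(y,x)$. For the unit law, I would first observe $x\odot 1$: if $x\le 1'=0$ then $x=0$ and $x\odot 1=0=x$; otherwise $x\odot 1=\Max L(x,1)=\Max L(x)=\{x\}$, which we identify with $x$. The case $1\odot x$ follows by commutativity. Here it is worth recording the small lemma that, in a \emph{finite} poset, $L(a,b)=L(\Max L(a,b))$, so $z\le a,b$ iff $z\le m$ for some $m\in\Max L(a,b)$; dually for $U$ and $\Min$. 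This will be used repeatedly.

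For adjointness I would split into the degenerate cases first. If $x\le y'$, then $x\odot y=0\le z$ always, so the left side holds; I must then show $x\le y\to z$ always holds too. If $y\le z$ then $y\to z=1$ and we are done; if $y\not\le z$ then $y\to z=\Min U(y',z)$, and since $x\le y'$ and $y'\in U(y',z)$ we indeed get $x\le y'\le$ every element of $\Min U(y',z)$ by the finiteness lemma applied to $U$. Symmetrically, if $x\le y$ then $x\to\,$: wait—rather, if $y\le z$ then $y\to z=1$ so the right side $x\le 1$ holds always, and I must check $x\odot y\le z$; but $x\odot y\subseteq L(x,y)\subseteq L(y)\subseteq L(z)$, done (the case $x\le y'$ giving $x\odot y=0\le z$ is subsumed). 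So the only substantive case is $x\not\le y'$ and $y\not\le z$, where $x\odot y=\Max L(x,y)$ and $y\to z=\Min U(y',z)$, and adjointness becomes
\[
\Max L(x,y)\le z\quad\Longleftrightarrow\quad x\le\Min U(y',z),
\]
i.e.\ by the finiteness lemma $L(x,y)\subseteq L(z)$ iff $x\le U(y',z)$, iff $x\in LU(y',z)$.

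The heart of the matter is thus the equivalence $L(x,y)\subseteq L(z)\iff x\in LU(y',z)$ under the standing hypotheses. For the direction ($\Leftarrow$): from $x\in LU(y',z)$ and distributivity I would write $L(x,y)\subseteq L\big(U(y',z),y\big)=LU\big(L(y',y),L(z,y)\big)$ using identity~(4). Now $L(y',y)=L(y,y')$, and since $y\ne 0$ and (because $y\not\le z$ forces $y\ne 1$, as $1\le z$ would fail only if... actually $y\not\le z$ with $z$ arbitrary does give $y\ne 1$ only when $z\ne 1$; if $z=1$ then $L(x,y)\subseteq L(z)$ trivially and there is nothing to prove, so assume $z\ne 1$, hence $y\ne 1$) — similarly $x\ne 0,1$ from $x\not\le y'$ — consistency condition~(8) gives $L(y,y')=L(x,x')$, and the hypothesis $x\not\le y'$ together with~(9)-type reasoning is where I expect to need care: I want $L(y,y')\cap L(x,\cdot)$ small. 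The cleanest route: by~(8), $L(y,y')=L(z',z)$ if $z\ne 0,1$, but more usefully combine $LU(L(y,y'),L(y,z))\subseteq LU(L(x,x'),L(y,z))$ and then argue $L(x,x')\subseteq L(z)$ isn't automatic—so instead I would go the other way and prove $L(x,y)\subseteq L(z)$ directly from $x\le U(y',z)$ via $x=x\sqcap x\le$ (elements of $U(y',z)$), meaning every $u\in L(x,y)$ satisfies $u\le x\le w$ for all $w\in U(y',z)$, hence $u\in LU(y',z)$; combined with $u\le y$ this gives $u\in L(y)\cap LU(y',z)$, and by distributivity $L(y)\cap LU(y',z)\subseteq LU(L(y,y'),L(y,z))$; finally $L(y,y')\subseteq L(z)$ must be extracted, and this is exactly where consistency is essential — since $y\ne 0,1$, $L(y,y')=L(a,a')$ for the unique atom $a$, and one shows $a\le z$ using that $z\ne 0$ forces $L(a,z)\ne 0$ by~(9), hence $a\le z$ as $a$ is the unique atom. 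For ($\Rightarrow$): from $L(x,y)\subseteq L(z)$ I would show $x\le U(y',z)$ by taking any $w\in U(y',z)$ and proving $x\le w$; dualize the above using the $U$-distributivity identity $U(L(x,y),z)\approx UL(U(x,z),U(y,z))$ together with $U(y,y')=\{1\}$ from the Lemma (applicable since $y\ne0,1$).

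The main obstacle, then, is the interplay in the non-degenerate case between distributivity and the two consistency axioms to pin down $L(y,y')$ and $U(y,y')$ correctly — concretely, showing $L(y,y')\subseteq L(z)$ whenever $z\ne 0$ and $y\ne 0,1$, which rests on the structural description (unique atom $a$ with $P=[a,a']\cup\{0,1\}$ and $L(y,y')=L(a)$) noted after the Definition, plus condition~(9). Once that is in hand, finiteness is used only to pass freely between $\Max L(x,y)\le z$ and $L(x,y)\subseteq L(z)$, and between $x\le \Min U(y',z)$ and $x\in LU(y',z)$, and the equivalence closes. I would organize the write-up as: (i) finiteness lemma; (ii) commutativity and unit; (iii) the three easy adjointness cases; (iv) the distributive computation in the remaining case, isolating the consistency input as a standalone claim.
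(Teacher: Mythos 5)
Your overall architecture matches the paper's (commutativity, unit law, the easy adjointness cases, then a distributivity computation in the remaining case, with finiteness used only to pass between $\max L(x,y)\leq z$ and $L(x,y)\subseteq L(z)$ and dually), but there are two genuine problems in the crucial case $x\not\leq y'$, $y\not\leq z$. First, your reduction to ``$x,y\neq 0,1$'' is wrong. From $x\not\leq y'$ you get $x\neq 0$ and $y\neq 0$, and from $y\not\leq z$ you get $z\neq 1$, but none of this excludes $x=1$, $y=1$ or $z=0$: for instance $x=1$ satisfies $x\not\leq y'$ whenever $y\neq 0$, and $y=1$ satisfies $y\not\leq z$ whenever $z\neq 1$. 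These three boundary subcases need separate treatment, exactly as the paper gives them: for $x=1$ both sides of the adjointness fail, using (9) to see that $\min U(y',z)=(\max L(y,z'))'\neq\{1\}$; for $z=0$ both sides fail, using (9) to see $L(x,y)\neq\{0\}$; for $y=1$ both sides reduce to $x\leq z$. Your main computation cannot absorb these cases, because it applies condition (8) to the pair $x,y$ and the unique-atom argument to $z$, which require $x,y\neq 0,1$ and $z\neq 0$.

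Second, and more seriously, your ($\Rightarrow$) direction rests on the claim $U(y,y')=\{1\}$ ``from the Lemma''. That is false in any consistent poset with more than two elements: there $L(y,y')=\{0,a\}$ for the unique atom $a$, hence $U(y,y')=(L(y,y'))'=\{1,a'\}\neq\{1\}$. The Lemma you cite has the hypothesis $L(b,a')=\{0\}$, which consistency precisely rules out, so it is not applicable. The ingredient that actually works --- and the one the paper uses --- is $U(y,y')=U(x,x')$, obtained from (8) by applying $'$, after which $UL(U(x,y'),U(x,x'))\subseteq ULU(x)=U(x)$ closes the argument; equivalently, in your atom language, $U(y,y')=\{1,a'\}\subseteq U(x)$ because $x\neq 1$ and $a'$ is the unique coatom. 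Your ($\Leftarrow$) direction is essentially sound once the boundary cases are split off; it differs from the paper only in substituting the unique-atom description (stated without proof in the paper's remark after the Definition) for the identity $L(y,y')=L(x,x')$ of condition (8), which is a legitimate but not materially different route.
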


\begin{proof}
Let $a,b,c\in P$. Because $a\leq b'$ is equivalent to $b\leq a'$ and, moreover, $L(a,b)=L(b,a)$, $\odot$ is commutative. Further,
\begin{align*}
& \text{if }a=0\text{ then }a\odot1=0=a, \\
& \text{if }a\neq0\text{ then }a\odot1=\max L(a,1)=\max L(a)=a.
\end{align*}
By commutativity of $\odot$ we obtain $a\odot1=1\odot a=a$. We consider the following cases:
\begin{itemize}
\item $a\leq b'$ and $b\leq c$. \\
Then $a\odot b=0\leq c$ and $a\leq1=b\rightarrow c$.
\item $a\leq b'$ and $b\not\leq c$. \\
Then $a\odot b=0\leq c$ and $a\leq b'\leq\min U(b',c)=b\rightarrow c$.
\item $a\not\leq b'$ and $b\leq c$. \\
Then $a\odot b=\max L(a,b)\leq b\leq c$ and $a\leq1=b\rightarrow c$.
\item $a\not\leq b'$, $b\not\leq c$. \\
In case $a=1$, $a\odot b\leq c$ and $a\leq b\rightarrow c$ are not possible because $a\odot b=1\odot b=b\not\leq c$. Moreover, $b,c'\neq0$ and therefore $b\rightarrow c=\min U(b',c)=(\max L(b,c'))'\neq0'=1$ whence $a=1\not\leq b\rightarrow c$. \\
Similarly, in case $c=0$, $a\odot b\leq c$ and $a\leq b\rightarrow c$ are not possible because $a,b\neq0$ and therefore $a\odot b=\max L(a,b)\neq0$ whence $a\odot b\not\leq c$. Moreover, $a\not\leq b'=\min U(b')=\min U(b',c)=b\rightarrow c$. \\
In case $b=1$ the following are equivalent:
\begin{align*}
a\odot b & \leq c, \\
 a\odot1 & \leq c, \\
       a & \leq c, \\
       a & \leq\min U(c), \\
       a & \leq\min U(1',c), \\
       a & \leq1\rightarrow c, \\
       a & \leq b\rightarrow c.
\end{align*}
There remains the case $a,b\neq1$ and $c\neq0$.  Then $a,b,c\neq0,1$. If $a\odot b\leq c$ then $\max L(a,b)\leq c$ and hence $L(a,b)\leq c$ whence
\begin{align*}
b\rightarrow c & =\min U(b',c)\subseteq U(b',c)\subseteq U(b',a\odot b)=U(b',L(a,b))= \\
               & =UL(U(b',a),U(b',b))=UL(U(b',a),U(a',a))\subseteq ULU(a)=U(a)
\end{align*}
which implies $a\leq b\rightarrow c$. If, conversely, $a\leq b\rightarrow c$ then $a\leq\min U(b',c)$ and hence $a\leq U(b',c)$ whence
\begin{align*}
a\odot b & =\max L(a,b)\subseteq L(a,b)\subseteq L(b\rightarrow c,b)=L(U(b',c),b)= \\
         & =LU(L(b',b),L(c,b))=LU(L(c',c),L(c,b))\subseteq LUL(c)=L(c)
\end{align*}
and hence $a\odot b\leq c$.
\end{itemize}
This shows that in any case $a\odot b\leq c$ is equivalent to $a\leq b\rightarrow c$.
\end{proof}

We now study residuation in not necessarily distributive consistent posets. For this purpose, we slightly modify our definition of residuation by deleting the assumption of commutativity of $\odot$.

\begin{definition}
A {\em weak consistent residuated poset} is an ordered six-tuple $(P,\leq,\odot,\rightarrow,0,1)$ where $(P,\leq,0,1)$ is a bounded poset and $\odot$ and $\rightarrow$ are mappings {\rm(}so-called operators{\rm)} from $P^2$ to $2^P$ satisfying the following conditions for all $x,y,z\in P$:
\begin{itemize}
\item $x\odot1\approx1\odot x\approx x$,
\item $x\odot y\leq z$ if and only if $x\leq y\rightarrow z$ {\rm(}adjointness{\rm)}.
\end{itemize}
\end{definition}

Let $(P,\leq,{}',0,1)$ be a poset with an antitone involution. We modify the definition of the mappings (so-called operators) $\odot$ and $\rightarrow$ from $P^2$ to $2^P$ in the following way:
\begin{enumerate}
\item[(14)]
$x\odot y:=\left\{
\begin{array}{ll}
0                 & \text{if }x\leq y', \\
\max L(U(x,y'),y) & \text{otherwise}
\end{array}
\right.
x\rightarrow y:=\left\{
\begin{array}{ll}
1                 & \text{if }x\leq y, \\
\min U(x',L(x,y)) & \text{otherwise}
\end{array}
\right.$
\end{enumerate}

Now, we are able to prove our second result on residuation.

\begin{theorem}
Let $(P,\leq,{}',0,1)$ be a finite strongly modular consistent poset and $\odot$ and $\rightarrow$ be defined by {\rm(14)}. Then $(P,\leq,\odot,\rightarrow,0,1)$ is a weak consistent residuated poset.
\end{theorem}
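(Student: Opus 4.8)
The plan is to mimic the proof of the distributive case (the previous theorem), replacing the distributive LU-identities by the strongly modular ones (2) and (3) at exactly the two places where distributivity was used. First I would dispose of the easy axiom $x\odot1\approx1\odot x\approx x$: if $a\leq1'=0$ then $a=0$ and $a\odot1=0=a$, otherwise $a\odot1=\max L(U(a,0),1)=\max L(a,1)=\max L(a)=a$; the identity $1\odot a=a$ is checked the same way using $1\leq a'$ iff $a=0$. Then I would prove adjointness by the same case analysis on whether $a\leq b'$ and whether $b\leq c$.

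\textbf{The case split.} In the three cases where at least one of $a\leq b'$, $b\leq c$ holds, the verification is immediate from the definitions exactly as in the distributive theorem: if $a\leq b'$ then $a\odot b=0\leq c$, and either $b\leq c$ gives $b\rightarrow c=1\geq a$, or $b\not\leq c$ gives $b\rightarrow c=\min U(b',L(b,c))$ and one checks $a\leq b'\leq U(b',L(b,c))$; if $a\not\leq b'$ but $b\leq c$ then $a\odot b=\max L(U(a,b'),b)\leq b\leq c$ and $b\rightarrow c=1$. The substantive case is $a\not\leq b'$ and $b\not\leq c$. Here I would first peel off the degenerate subcases $a=1$, $c=0$, $b=1$, $a=0$, $b=0$ exactly as done in the distributive proof (using (8),(9) to guarantee the relevant elements are nonzero, so that $\max L$ and $\min U$ of the pertinent sets are not $\{0\}$ or $\{1\}$), reducing to $a,b,c\neq0,1$.

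\textbf{The main computation.} With $a,b,c\neq0,1$, suppose $a\odot b\leq c$, i.e.\ $L(U(a,b'),b)\leq c$. Then I want
\[
b\rightarrow c=\min U\bigl(b',L(b,c)\bigr)\subseteq U(a),
\]
which gives $a\leq b\rightarrow c$. The idea is: $U(b',L(b,c))\subseteq U(b',L(U(a,b'),b))$ since $L(U(a,b'),b)\subseteq L(b,c)$; now rewrite $U(b',L(U(a,b'),b))$. Applying an $L$-$U$ manipulation together with identity (8) (which lets me replace $L(b',b)$-type cones, or rather $U(b',b)$, by $U(a',a)$ since $a,b\neq0,1$) I expect to land inside $U(a)$ via the strongly modular identity (2), $L(U(x,y),U(x,z))\approx LU(x,L(y,U(x,z)))$, or its $UL$-dual $UL(U(x,y),U(x,z))\approx U(x,L(y,U(x,z)))$, used with a suitable substitution such as $x:=b'$. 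The converse direction, $a\leq b\rightarrow c$ implies $a\odot b\leq c$, is the mirror computation: from $a\leq U(b',L(b,c))$ one gets $L(U(a,b'),b)\subseteq L(U(U(b',L(b,c)),b'),b)$, and identity (3), $L(U(L(x,z),y),z)\approx LU(L(x,z),L(y,z))$ (with $x,z,y$ chosen so that $L(x,z)$ plays the role of $L(b,c)$ or $L(b',b)$), should collapse this into $L(c)$, again invoking (8) to identify $L(b',b)=L(a',a)$ and then dropping it.

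\textbf{Main obstacle.} The delicate part is choosing the substitutions into (2) and (3) so that the intermediate cones match; unlike the distributive identity (4), the strongly modular identities have the "parameter" $x$ appearing on both sides, so one must arrange $U(a,b')$ and $L(b,c)$ (or $L(b',b)$) to appear in the shape $U(x,y)$, $L(y,U(x,z))$ etc. This is why the operators in (14) are defined with the extra factors $U(x,y')$ and $L(x,y)$ built in—they are precisely the terms needed to fit the strongly modular templates. I also expect to need the facts, recorded in the preliminaries, that $L(b',b)=L(a',a)$ and $U(b',b)=U(a',a)$ for $a,b\neq0,1$ (from (8) and dualizing), together with the standard absorption identities $ULU=U$, $LUL=L$, and the observation that finiteness guarantees $\max$ and $\min$ are nonempty and that $\max L(S)\leq c$ is equivalent to $L(S)\leq c$. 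Once the two containments are set up, everything is a routine chain of $L$/$U$ rewrites.
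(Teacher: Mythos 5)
Your proposal is correct and follows essentially the same route as the paper: the identical case analysis, the reduction to $a,b,c\neq 0,1$, and then the two containments $b\rightarrow c\subseteq U(a)$ and $a\odot b\subseteq L(c)$ obtained by rewriting $U(b',L(b,U(a,b')))$ via the $UL$-form of (2) with $x:=b'$ and $L(U(L(b,c),b'),b)$ via (3), followed by the swap $U(b',b)=U(a',a)$ resp.\ $L(b',b)=L(c',c)$ coming from (8). The substitutions you guess are exactly the ones the paper uses, and the remaining $L$/$U$ manipulations (including $L(\max S)$-type reductions justified by finiteness) go through as you indicate.
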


\begin{proof}
Let $a,b,c\in P$. If $a=0$ then $a\odot1=0=a$ and $1\odot a=0=a$. If $a\neq0$ then
\begin{align*}
 a\odot1 & =\max L(U(a,1'),1)=\max LU(a)=\max L(a)=a, \\
1\odot a & =\max L(U(1,a'),a)=\max L(a)=a.
\end{align*}
We consider the following cases:
\begin{itemize}
\item $a\leq b'$ and $b\leq c$. \\
Then $a\odot b=0\leq c$ and $a\leq1=b\rightarrow c$.
\item $a\leq b'$ and $b\not\leq c$. \\
Then $a\odot b=0\leq c$ and $a\leq b'\leq\min U(b',L(b,c))=b\rightarrow c$.
\item $a\not\leq b'$ and $b\leq c$. \\
Then $a\odot b=\max L(U(a,b'),b)\leq b\leq c$ and $a\leq1=b\rightarrow c$.
\item $a\not\leq b'$, $b\not\leq c$. \\
In case $a=1$, $a\odot b\leq c$ and $a\leq b\rightarrow c$ are not possible because $a\odot b=1\odot b=b\not\leq c$. Moreover, $b,c'\neq0$ and hence $L(b,c')\neq0$ which implies $L(b,U(b',c'))\neq0$ and therefore $b\rightarrow c=\min U(b',L(b,c))=(\max L(b,U(b',c')))'\neq0'=1$ whence $a=1\not\leq b\rightarrow c$. \\
Similarly, in case $c=0$, $a\odot b\leq c$ and $a\leq b\rightarrow c$ are not possible because $a,b\neq0$ and hence $L(a,b)\neq0$ whence $L(U(a,b'),b)\neq0$ and therefore $a\odot b=\max L(U(a,b'),b)\neq0$ whence $a\odot b\not\leq c$. Moreover, $a\not\leq b'=\min U(b')=\min U(b',L(b,c))=b\rightarrow c$. \\
In case $b=1$ the following are equivalent:
\begin{align*}
a\odot b & \leq c, \\
 a\odot1 & \leq c, \\
       a & \leq c, \\
       a & \leq\min U(c), \\
       a & \leq\min UL(c), \\
       a & \leq\min U(1',L(1,c)), \\ 
       a & \leq1\rightarrow c, \\
       a & \leq b\rightarrow c.
\end{align*}
There remains the case $a,b\neq1$ and $c\neq0$.  Then $a,b,c\neq0,1$. If $a\odot b\leq c$ then
\begin{align*}
b\rightarrow c & =\min U(b',L(b,c))\subseteq U(b',L(b,c))\subseteq U(b',L(b,a\odot b))= \\
               & =U(b',L(b,\max L(U(a,b'),b)))=U(b',L(b)\cap L(\max L(U(a,b'),b)))= \\
               & =U(b',L(b)\cap L(U(a,b'),b))=U(b',L(b,U(a,b')))=UL(U(b',b),U(a,b'))= \\
               & =UL(U(a',a),U(a,b'))\subseteq ULU(a)=U(a)
\end{align*}
which implies $a\leq b\rightarrow c$. If, conversely, $a\leq b\rightarrow c$ then
\begin{align*}
a\odot b & =\max L(U(a,b'),b)\subseteq L(U(a,b'),b)\subseteq L(U(b\rightarrow c,b'),b)= \\
         & =L(U(\min U(b',L(b,c)),b'),b)=L(U(\min U(b',L(b,c)))\cap U(b'),b)= \\
         & =L(U(b',L(b,c))\cap U(b'),b)=L(U(b',L(b,c)),b)=L(U(L(b,c),b'),b)= \\
         & =LU(L(b,c),L(b',b))=LU(L(b,c),L(c',c))\subseteq LUL(c)=L(c)
\end{align*}
and hence $a\odot b\leq c$.
\end{itemize}
This shows that in any case $a\odot b\leq c$ is equivalent to $a\leq b\rightarrow c$.
\end{proof}

\section{Dedekind-MacNeille completion}

In what follows we investigate the question for which posets $\mathbf P$ with an antitone involution their Dedekind-MacNeille completion $\BDM(\mathbf P)$ is a consistent lattice. A bounded {\em lattice} $(L,\vee,\wedge,{}',0,1)$ with an antitone involution is called {\em consistent} if it is consistent when considered as a poset, i.e.\ if
\begin{align*}
x\wedge x' & =y\wedge y'\text{ for all }x,y\in L\setminus\{0,1\}, \\
 x\wedge y & \neq0\text{ for all }x,y\in L\setminus\{0\}.
\end{align*}
Let $\mathbf P=(P,\leq,{}')$ be a poset with an antitone involution. Define
\begin{align*}
 \DM(\mathbf P) & :=\{L(A)\mid A\subseteq P\}, \\
            A^* & :=L(A')\text{ for all }A\in\DM(\mathbf P), \\
\BDM(\mathbf P) & :=(\DM(\mathbf P),\subseteq,^*)
\end{align*}
Then $\BDM(\mathbf P)$ is a complete lattice with an antitone involution, called the {\em Dedekind-MacNeille completion} of $\mathbf P$. That $^*$ is an antitone involution on $(\DM(\mathbf P),\subseteq)$ can be seen as follows. Let $A,B\in\DM(\mathbf P)$. If $A\subseteq B$ then $A'\subseteq B'$ and hence $B^*=L(B')\subseteq L(A')=A^*$. Moreover, $A^{**}=L((L(A'))')=LU(A)=A$. We have
\begin{align*}
 (L(A))^* & =L((L(A))')=LU(A')\text{ for all }A\subseteq P, \\
  A\vee B & =LU(A,B)\text{ for all }A,B\in\DM(\mathbf P), \\
A\wedge B & =A\cap B\text{ for all }A,B\in\DM(\mathbf P).
\end{align*}

\begin{theorem}\label{th6}
Let $\mathbf P=(P,\leq,{}')$ be a poset with an antitone involution. Then $\BDM(\mathbf P)$ is a consistent lattice if and only if $\mathbf P$ is a consistent poset.
\end{theorem}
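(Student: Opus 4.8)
The plan rests on the concrete description of $\BDM(\mathbf P)$ recalled above. Its universe $\DM(\mathbf P)$ consists of the sets $L(A)$, $A\subseteq P$ --- equivalently of the subsets $X\subseteq P$ with $LU(X)=X$, each of which is a down-set of $\mathbf P$ --- ordered by inclusion, with meet $\cap$, least element $L(P)$ and greatest element $L(\emptyset)=P$; it is already known to be a complete lattice with the antitone involution $^*$, so only conditions (8) and (9), read in $\BDM(\mathbf P)$, have to be addressed. Since $(L(x))'=U(x')$, we get $L(x)^*=L(U(x'))=LU(x')=L(x')$, hence the key identity $L(x)\cap L(x)^*=L(x,x')$ for all $x\in P$; this is what transports (8) and (9) between $\mathbf P$ and the principal ideals inside $\BDM(\mathbf P)$. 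I shall also use that $\mathbf P$ has a least element iff it has a greatest one (as $'$ is an anti-automorphism) and, for the forward implication, the structural description of consistent posets stated just after the Definition.

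Suppose first that $\mathbf P$ is a consistent poset. If $|P|\le 2$ then $\DM(\mathbf P)$ has at most two elements and (8), (9) are vacuous in $\BDM(\mathbf P)$, so assume $|P|\ge 3$ and let $a$ be the unique atom: then $P=[a,a']\cup\{0,1\}$, $'$ restricts to a complementation on $[a,a']$, $a\le x$ for all $x\neq 0$, and $L(a)=\{0,a\}$ is the common value of $L(x,x')$ for $x\neq 0,1$. Condition (9) for $\BDM(\mathbf P)$ is then immediate: if $X,Y\in\DM(\mathbf P)$ both differ from $L(P)=\{0\}$, each contains a nonzero element, hence (being a down-set) contains $a$, so $\{0\}\neq\{0,a\}\subseteq X\cap Y$. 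For condition (8) I would show that $X\cap X^*=\{0,a\}$ for every $X\in\DM(\mathbf P)\setminus\{\{0\},P\}$. For $\supseteq$: $a\in X$ as above, and $a\in X^*=L(X')$ because $X\le a'$ --- indeed $X=LU(X)\neq P$ forces $U(X)$ to contain some $c\neq 1$, which cannot be $0$ (else $X=\{0\}$), so $c\in[a,a']$ and $X\le c\le a'$. For $\subseteq$: if $z\in X\cap X^*$ with $z\neq 0$, then $z\in X\neq P$ forces $z\neq 1$, so $z\in[a,a']$; and $z\in L(X')$ together with $z\in X$ gives $z\le z'$, which, since $'$ is a complementation on $[a,a']$, forces $z=a$. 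Thus $X\cap X^*=L(a)$ for every non-extreme $X$, which is (8), and $\BDM(\mathbf P)$ is a consistent lattice.

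Conversely, assume $\BDM(\mathbf P)$ is a consistent lattice. The only step beyond bookkeeping is to show $\mathbf P$ bounded. If it were not, it would have neither a least nor a greatest element; then $L(P)=\emptyset$ is the bottom of $\BDM(\mathbf P)$, and for each $x\in P$ the ideal $L(x)$ is nonempty and different from $P$, hence a non-extreme element of $\BDM(\mathbf P)$. Condition (8) for $\BDM(\mathbf P)$ then makes $L(x)\cap L(x)^*=L(x,x')$ a single fixed set $M$ for all $x$, and condition (9) makes $M=L(x)\cap L(x')\neq\emptyset$; but any $z\in M$ satisfies $z\le x$ for all $x\in P$ and so is a least element of $\mathbf P$ --- a contradiction. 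Hence $\mathbf P$ is bounded and $L(P)=\{0\}$. Conditions (8) and (9) for $\mathbf P$ now follow by specializing those for $\BDM(\mathbf P)$ to principal ideals: for $x,y\neq 0,1$ the ideals $L(x),L(y)$ are non-extreme, so $L(x,x')=L(x)\cap L(x)^*=L(y)\cap L(y)^*=L(y,y')$; and for $x,y\neq 0$ the ideals $L(x),L(y)$ differ from $\{0\}$, so $L(x,y)=L(x)\cap L(y)\neq\{0\}$. Thus $\mathbf P$ is a consistent poset.

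The substance of the proof is entirely in the boundedness step of the converse; the rest is a transfer of (8) and (9) across the order- and $'$-preserving embedding $x\mapsto L(x)$, using $L(x)^*=L(x')$ and the fact that members of $\DM(\mathbf P)$ are down-sets. I would also be careful with the degenerate cardinalities --- the one- and two-element posets, and the tacit assumption $P\neq\emptyset$ --- where $\BDM(\mathbf P)$ simply has too few elements for condition (8) to carry content.
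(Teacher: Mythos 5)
Your proof is correct, and it takes a genuinely different route from the paper's in both directions. For the forward implication the paper works with an arbitrary $A\subseteq P$ with $L(A)\neq 0,P$, picks $a\in L(A)\setminus\{0\}$, and sandwiches $L(A)\cap LU(A')$ between two copies of $L(a,a')$ by a union/intersection computation that repeatedly invokes condition (8) for $\mathbf P$; it never appeals to the atom structure. You instead lean on the structural characterization stated after the Definition ($P=[a,a']\cup\{0,1\}$ with a unique atom $a$ and $'$ a complementation on $[a,a']$), which reduces everything to two clean observations: every element of $\DM(\mathbf P)$ other than $\{0\}$ contains $a$ (giving (9)), and $X\leq a'$ whenever $X\neq P$ (giving $X\cap X^*=\{0,a\}$, hence (8)). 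This is shorter and more transparent, at the cost of resting on a fact the paper asserts only as ``easy to see.'' For the converse the paper says it is evident; you supply the actual content, most importantly the proof that $\mathbf P$ must be bounded --- a point that genuinely needs an argument, since the hypothesis of the theorem does not assume boundedness --- before transferring (8) and (9) along the embedding $x\mapsto L(x)$ via $L(x)^*=L(x')$. Your caveats about the degenerate cardinalities are apposite: for $|P|=2$ condition (9) in $\BDM(\mathbf P)$ is not vacuous but trivially satisfied (a harmless imprecision), and for $P=\emptyset$ the statement as literally formulated fails, a case the paper also ignores.
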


\begin{proof}
Assume $\mathbf P$ to be a consistent poset. Further assume $A\subseteq P$ and $L(A)\neq0,P$. Then $1\notin L(A)$ and there exists some $a\in L(A)\setminus\{0\}$. Hence $0\notin U(A')$ and $a'\in U(A')\setminus\{1\}$. Now
\begin{align*}
L(A)\wedge(L(A))^* & =L(A)\cap LU(A')=\bigcup_{x\in L(A)}L(x)\cap\bigcap_{y\in U(A')}L(y)= \\
                   & =\bigcup_{x\in L(A)\setminus\{0\}}L(x)\cap\bigcap_{y\in U(A')\setminus\{1\}}L(y).
\end{align*}
Now
\begin{align*}
L(a,a') & =\bigcap_{y\in U(A')\setminus\{1\}}L(y',y)=\bigcap_{y\in U(A')\setminus\{1\}}(L(y')\cap L(y))\subseteq \\
        & \subseteq\bigcap_{y\in U(A')\setminus\{1\}}(\bigcup_{x\in L(A)\setminus\{0\}}L(x)\cap L(y))=\bigcup_{x\in L(A)\setminus\{0\}}L(x)\cap\bigcap_{y\in U(A')\setminus\{1\}}L(y)= \\
        & =\bigcup_{x\in L(A)\setminus\{0\}}(L(x)\cap\bigcap_{y\in U(A')\setminus\{1\}}L(y))\subseteq\bigcup_{x\in L(A)\setminus\{0\}}(L(x)\cap L(x'))= \\
        & =\bigcup_{x\in L(A)\setminus\{0\}}L(x,x')=L(a,a')
\end{align*}
and hence $L(A)\wedge(L(A))^*=L(a,a')$. This shows $L(A)\wedge(L(A))^*=L(B)\wedge(L(B))^*$ for all $A,B\subseteq P$ with $L(A),L(B)\neq0,P$. Now assume $A,B\subseteq P$ and $L(A),L(B)\neq0$. Then there exists some $a\in L(A)\setminus\{0\}$ and some $b\in L(B)\setminus\{0\}$. Since $\mathbf P$ is consistent there exists some $c\in L(a,b)\setminus\{0\}$. Now $L(c)\subseteq L(a)\subseteq L(A)$, $L(c)\subseteq L(b)\subseteq L(B)$ and $0\neq c\in L(c)$ and hence $L(c)\neq0$. This shows that $\BDM(\mathbf P)$ is a consistent lattice provided $\mathbf P$ is a consistent poset. The converse is evident.
\end{proof}

Compliance with Ethical Standards: This study was funded by the Austrian Science Fund (FWF), project I~4579-N, and the Czech Science Foundation (GA\v CR), project 20-09869L, as well as by \"OAD, project CZ~02/2019, and, concerning the first author, by IGA, project P\v rF~2020~014. The authors declare that they have no conflict of interest. This article does not contain any studies with human participants or animals performed by any of the authors.

Authors' addresses:

Ivan Chajda \\
Palack\'y University Olomouc \\
Faculty of Science \\
Department of Algebra and Geometry \\
17.\ listopadu 12 \\
771 46 Olomouc \\
Czech Republic \\
ivan.chajda@upol.cz

Helmut L\"anger \\
TU Wien \\
Faculty of Mathematics and Geoinformation \\
Institute of Discrete Mathematics and Geometry \\
Wiedner Hauptstra\ss e 8-10 \\
1040 Vienna \\
Austria, and \\
Palack\'y University Olomouc \\
Faculty of Science \\
Department of Algebra and Geometry \\
17.\ listopadu 12 \\
771 46 Olomouc \\
Czech Republic \\
helmut.laenger@tuwien.ac.at

\begin{thebibliography}{99}
\bibitem{CKL1}
I.~Chajda, M.~Kola\v r\'ik and H.~L\"anger, Varieties corresponding to classes of complemented posets. Miskolc Math.\ Notes (submitted). http://arxiv.org/abs/1911.05138.
\bibitem{CKL2}
I.~Chajda, M.~Kola\v r\'ik and H.~L\"anger, Extensions of posets with an an\-ti\-to\-ne involution to residuated structures. Fuzzy Sets Systems (submitted). http://arxiv.org/abs/2004.14127.
\bibitem{CL11}
I.~Chajda and H.~L\"anger, Directoids. An Algebraic Approach to Ordered Sets. Heldermann, Lemgo 2011. ISBN 978-3-88538-232-4.
\bibitem{CL14}
I.~Chajda and H.~L\"anger, Orthomodular posets can be organized as conditionally residuated structures. Acta Univ.\ Palacki.\ Olomuc., Fac.\ rer.\ nat., Math.\ {\bf53} (2014), 29--33.
\bibitem{CL17}
I.~Chajda and H.~L\"anger, Residuation in orthomodular lattices. Topological Algebra Appl.\ {\bf5} (2017), 1--5.
\bibitem{CL18}
I.~Chajda and H.~L\"anger, Residuated operators in complemented posets.	Asian-European J.\ Math.\ {\bf11} (2018), 1850097 (15 pages).
\bibitem{CL19}
I.~Chajda and H.~L\"anger, Residuation in modular lattices and posets. Asian-European J.\ Math.\ {\bf12} (2019), 1950092 (10 pages).
\bibitem{CL1}
I.~Chajda and H.~L\"anger, Residuation in finite posets. Math.\ Slovaca (submitted). http://arxiv.org/abs/1910.09009.
\bibitem{CL2}
I.~Chajda and H.~L\"anger, Kleene posets and pseudo-Kleene posets. Order (submitted).	http://arxiv.org/abs/2006.04417.
\bibitem{JQ}
J.~Je\v zek and R.~Quackenbush, Directoids: algebraic models of up-directed sets. Algebra Universalis {\bf27} (1990), 49--69.
\bibitem{LR}
J.~Larmerov\'a and J.~Rach\r unek, Translations of distributive and modular ordered sets. Acta Univ.\ Palacki.\ Olomuc., Fac.\ rer.\ nat., Math.\ {\bf27} (1988), 13--23.
\end{thebibliography}
\end{document}